\newtheorem{theorem}{Theorem}[section]
\newtheorem{lemma}{Lemma}[section]
\newtheorem{rem}{Remark}[section]
\newtheorem{prop}{Proposition}[section]
\theoremstyle{definition}
\begin{document} 
\title{ A note on the Gaussian curvature of harmonic surfaces  }
\author{Kaveh Eftekharinasab\footnote{Institute of mathematics of Ukrainian Academy of Sciences. E-mail: kaveh@imath.kiev.ua}}
\date{}
\maketitle
\begin{abstract}
It was proved that the fundamental group of the space of harmonic polynomials of degree $n(n \geq 2)$ which their graphs have the same
Gaussian curvature is not trivial. Furthermore, we give an example of topologically nonequivalent conjugate
harmonic functions which their graphs have the same Gaussian curvature.
\end{abstract}

\section{Topological equivalency of harmonic functions }
Let $w = u(x,y)$ be  (at least) a $C^2$ harmonic function of real variables $x$ and $y$ defined on a region  $\Omega$ of $\mathbb{R}^2$.
 It is well known that its Gaussian curvature, denoted by $k(x,y)$, can be given by: $$k(x,y)= \dfrac{(u_{xx}u_{yy}-u_{xy}^2) }{ (1 + u_x^2 + u_y^2)}.$$ 
 Set $z = x + iy$ and rewrite $w = u(x,y)$ as follows:
\begin{equation*}
 u(x,y)= \operatorname{Re} f(z) = \dfrac{1}{2}(f(z) + \overline{f(z)}).
\end{equation*}
The partial differential operators of the holomorphic function $f(z)$ of the complex variable $z$ are given by:
\begin{equation*}
 \dfrac{\partial}{\partial z}=\dfrac{1}{2}\left(\dfrac{\partial}{\partial x} - \operatorname{i}\dfrac{\partial}{\partial y}\right),\quad
\dfrac{\partial}{\partial \overline{z}}=\dfrac{1}{2}\left(\dfrac{\partial}{\partial x} - \operatorname{i}\dfrac{\partial}{\partial y}\right),
\end{equation*}
These definitions are forced upon us by the logical requirement that 
\begin{equation*}
 \dfrac{\partial f}{\partial z} = f^{'}(z),\quad 
\dfrac{\partial f}{\partial \overline{z}} = 0,\quad
\dfrac{\partial \overline{f}}{\partial z} = 0,\quad 
\dfrac{\partial \overline{f}}{\partial \overline{z}} = \overline{f^{'}}(z).
\end{equation*}
It is easily seen that the Gaussian curvature of the graph obtained from  $w = u(x,y)$ can be
write as:
\begin{equation}\label{ga}
 K(x,y) = \dfrac{|f^{''}(z)|^2}{-(1 + |f^{'}(z)|^2)^2}.
\end{equation}
We recall that a critical point $p$ of a smooth function $w = g(x_1,x_2)$ is called non-degenerate
if the determinant of the Hessian matrix
\begin{equation*}
H(g) = \left( \dfrac{\partial^{2}g(x_1,x_2)}{\partial x_i \partial x_j}\right) 
\end{equation*}
dose not vanish at $p$. Otherwise, it is called degenerate critical point. 
\begin{lemma}\label{ka}
 The Gaussian curvature of the graph of the harmonic function $ w = u(x,y)$,
 defined on a region $\Omega$ is nonpositive, and vanishes only in isolated points which are
degenerate critical points of $w$. 
\end{lemma}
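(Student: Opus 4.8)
The plan is to read everything off formula~\eqref{ga}, in which the numerator $|f''(z)|^2$ is a nonnegative real number while the denominator $-(1+|f'(z)|^2)^2$ is strictly negative, since $1+|f'(z)|^2\ge 1>0$. Consequently $K(x,y)\le 0$ throughout $\Omega$, and $K(x,y)=0$ at a point precisely when $f''(z)=0$ there. This disposes of the nonpositivity claim immediately and reduces the remainder of the lemma to an analysis of the zero set of the holomorphic function $f''$.

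Next I would translate the condition $f''(z)=0$ into Hessian language. Writing $f=u+iv$ and using the Cauchy--Riemann equations gives $f'(z)=u_x-iu_y$, and differentiating once more (so that $\partial/\partial z$ acts as $\partial/\partial x$ on the holomorphic $f'$) yields $f''(z)=u_{xx}-iu_{xy}$. Hence $|f''(z)|^2=u_{xx}^2+u_{xy}^2$, and the harmonicity relation $u_{yy}=-u_{xx}$ turns the Hessian determinant into
\[
\det H(u)=u_{xx}u_{yy}-u_{xy}^2=-\left(u_{xx}^2+u_{xy}^2\right)=-|f''(z)|^2 .
\]
Therefore $K=0$ at a point is equivalent to $u_{xx}=u_{xy}=u_{yy}=0$ there; the entire Hessian vanishes, so in particular $\det H(u)=0$ and the point is a degenerate point of $u$ in the sense that its Hessian is singular.

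Finally, for the isolatedness I would invoke the identity theorem: $f''$ is holomorphic on $\Omega$, so unless it is identically zero --- the trivial case in which $f$ is affine and the graph is a flat plane with $K\equiv 0$ --- its zeros form a discrete set, which gives the isolated flat points. The delicate step, and the one I expect to be the real obstacle, is the assertion that these flat points are \emph{critical} points of $w$: the vanishing of $f''$ forces the Hessian to vanish but not, a priori, the gradient $\nabla u=(u_x,u_y)$, i.e.\ it does not force $f'(z)=0$. For a general harmonic $u$ the two zero sets $\{f'=0\}$ and $\{f''=0\}$ need not coincide, so this is exactly where additional structure on $f$ must enter. For the harmonic polynomials $\operatorname{Re} z^n$ that motivate the paper one has $f'=nz^{n-1}$ and $f''=n(n-1)z^{n-2}$ sharing the single zero $z=0$, and I would aim to isolate precisely the hypothesis on $f$ under which $\{f''=0\}\subseteq\{f'=0\}$, so that every isolated flat point is genuinely a degenerate critical point of $w$.
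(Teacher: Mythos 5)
Your computations are all correct, and for the parts of the lemma that are actually true you follow essentially the paper's route: nonpositivity is read directly off formula~\eqref{ga}, and isolatedness of the flat points comes from the holomorphy of $f''$ (here you are in fact more careful than the paper, which never mentions the trivial case $f''\equiv 0$, i.e.\ $u$ affine, where $K\equiv 0$ on all of $\Omega$ and the flat points are not isolated). Your identity $\det H(u)=-\lvert f''(z)\rvert^2$, showing that the \emph{entire} Hessian of $u$ vanishes wherever $K$ does, is also correct and is sharper than anything appearing in the paper's proof.

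The obstacle you flag at the end is not a step you failed to find: it is a genuine error in the paper. The paper's proof handles it with the sentence ``The zeros of $f''(x+\operatorname{i}y)=0$ are critical points of $f(x+\operatorname{i}y)$,'' which confuses critical points of $f$ (zeros of $f'$) with critical points of $f'$ (zeros of $f''$); these sets are unrelated in general. Your suspicion that one needs the extra hypothesis $\{f''=0\}\subseteq\{f'=0\}$ is exactly right, and without it the lemma is false: take $f(z)=z^3+z$, so $u(x,y)=x^3-3xy^2+x$. Then $f''(z)=6z$ vanishes at $z=0$, so $K(0,0)=0$ by~\eqref{ga}, yet $f'(0)=1\neq 0$, i.e.\ $\nabla u(0,0)=(1,0)$, so the flat point $(0,0)$ is not a critical point of $u$ at all, let alone a degenerate one. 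What survives in general --- and what your argument actually proves --- is: $K\le 0$; $K$ vanishes exactly on the zero set of $f''$, which is isolated when $f$ is not affine; at every such point the Hessian of $u$ vanishes identically; and hence any flat point that happens to be a critical point is automatically degenerate. That weaker statement is the most that can be established, and it does suffice for the examples $\operatorname{Re}z^n$ that the rest of the paper relies on.
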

\begin{proof}
 Let $f(x+\operatorname{i}y)$ be a holomorphic function, and $w = u(x,y)$ its real part. The Gaussian curvature of the 
graph of $w=u(x,y)$ vanishes when $f''(x+\operatorname{i}y)= 0$. Since $f(x,y)$ is holomorphic, it follows that 
$f''(x+\operatorname{i}y)$ is holomorphic therefore, its zeros are isolated. Moreover, critical points of $f(x+\operatorname{i}y)$ 
and $w=u(x,y)$ coincide. The zeros of $f''(x+\operatorname{i}y)=0$ are critical points of $f(x+\operatorname{i}y)$ and 
hence critical points of $w=u(x,y)$. From the equation 
\begin{equation*}
 K(x,y) = \dfrac{u_{xx} u_{yy} - u^2_{xy}}{(1 + u^2_x + u^2_y)^2} = 0,
\end{equation*}
it follows that,
$u_{xx} u_{yy} - u^2_{xy} = 0$ if and only if critical points are degenerate.
\end{proof}
\begin{rem}
 Suppose $w=u(x,y)$ is a harmonic function of degree $n(n > 2)$. Then it is obvious that the cardinality
of the set $\Gamma$ of points at which the Gaussian curvature of the graph of $w=u(x,y)$ are zero, is not greater than
$n-2$. By Gauss-Lucas theorem (cf.~\cite{mar}) points of $\Gamma$ lie within the convex hull $\Delta$ containing
the zeros of holomorphic function $f(x+\operatorname{i}y)=u(x,y)+\operatorname{i} v(x,y)$. If the multiplicity of zeros of
 $f(x+\operatorname{i}y)$ is not greater than two, then the points of $\Gamma$ lie inside the  convex hall
 $\delta\subset\Delta$ containing the zeros of
$f'(x+\operatorname{i}y)$.
\end{rem}
\begin{rem}\label{eq}
 It is immediate from the Equation~\eqref{ga} that if $f= u +\operatorname{i}v$ is a holomorphic function,
then the graphs of $u$,$v$ and $f$ have the same Gaussian curvature, see~\cite{j}.   
 
\end{rem}
Recall that two smooth functions $w,v :\Omega \subset \mathbb{R}^2 \rightarrow \mathbb{R}$
are said to be  topologically equivalence if there exist homomorphisms $\phi : \Omega \rightarrow \Omega$ and
$\psi : \mathbb{R} \rightarrow \mathbb{R} $ such that $\psi \circ w = v\circ \phi$.

Let $f$ be a smooth function on $\Omega$, and $p$ a non-degenerate critical point of $f$. A component
of the level line $f^{-1}(p)$ is called fiber. Two functions $f$ and $g$ are called fiber equivalent 
at a non-degenerate critical point $p$ if there exists a homeomorphism which maps corresponding fibers to
each other, local extrema  to local extrema and saddle points to saddle points. Clearly, if $f$ and $g$ are topological
equivalent they are fiber equivalent at non-degenerate critical points. 
\begin{prop}
 There exist topological nonequivalent conjugate harmonic functions which their graphs have the same Gaussian curvature.
\end{prop}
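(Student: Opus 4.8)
The plan is to produce a single holomorphic function $f = u + \operatorname{i} v$ whose conjugate harmonic parts $u$ and $v$ automatically have graphs of the same Gaussian curvature by Remark~\ref{eq}, and then to separate $u$ and $v$ up to topological equivalence by a combinatorial invariant of their singular level sets.

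First I would take $f(z) = z^3 - 3z$ on $\Omega = \mathbb{R}^2$, with $u = \operatorname{Re} f$ and $v = \operatorname{Im} f$. Since $f$ is holomorphic, $u$ and $v$ are conjugate harmonic, and Remark~\ref{eq} shows their graphs share the same Gaussian curvature; this settles the curvature requirement immediately, and it remains only to prove topological non-equivalence.

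Next I would examine the critical data. As $f'(z) = 3(z^2 - 1)$, the common critical set of $u$ and $v$ is $\{\pm 1\}$, and these points are non-degenerate because $f''(\pm 1) = \pm 6 \ne 0$; by Lemma~\ref{ka} the curvature is then strictly negative at each, so both are honest non-degenerate saddles of $u$ and of $v$. The crucial feature is that the critical values $f(\pm 1) = \mp 2$ are \emph{real}. Consequently, for $v = \operatorname{Im} f$ both saddles sit on the single level $\{v = 0\}$, whereas for $u = \operatorname{Re} f$ they sit on the two distinct levels $\{u = \pm 2\}$. Explicitly, $\{v = 0\}$ is the union of the line $y = 0$ with the hyperbola $x^2 - \tfrac{1}{3}y^2 = 1$, and these cross exactly at $(\pm 1, 0)$, so a single fiber of $v$ carries two saddles; by contrast each level set of $u$ contains at most one saddle.

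Finally I would extract the contradiction. Were $u$ and $v$ topologically equivalent through $\psi \circ u = v \circ \phi$, then $\phi$ would map $\{u = c\}$ homeomorphically onto $\{v = \psi(c)\}$ and, by the remark preceding the Proposition, would be a fiber equivalence at the non-degenerate critical points, hence would carry saddles to saddles. The distinguished fiber $\{v = 0\}$ would then pull back to a single level set $\{u = \psi^{-1}(0)\}$ forced to contain the two saddles $\phi^{-1}(\pm 1)$, contradicting the fact that no level set of $u$ holds more than one saddle. The only delicate step, I expect, is this last one: stating the obstruction strictly in the paper's language of fiber equivalence so that ``the number of saddles lying on one singular fiber'' is transparently invariant under topological equivalence; the geometric input --- the two critical values coalesce for $v$ but split for $u$ --- then does the rest.
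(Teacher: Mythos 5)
Your proposal is correct and is essentially the paper's own proof: the paper uses exactly the same example, the conjugate pair $u(x,y)=x^3-3xy^2-3x$ and $v(x,y)=3x^2y-y^3-3y$ coming from $f(z)=z^3-3z$, and the same obstruction, namely that the two non-degenerate critical points $(\pm 1,0)$ lie on distinct level lines of $u$ but on a single level line of $v$, which violates fiber equivalence. Your write-up merely makes explicit the details the paper leaves implicit (the values $f(\pm 1)=\mp 2$, the decomposition of $\{v=0\}$ into the line and the hyperbola, and the invariance of the saddle count per fiber).
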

\begin{proof}
  Consider conjugate harmonic functions $u(x,y)= x^3-3xy^2-3x$ and $v(x,y)=-y^3+3x^2y-3y,$.
 Each of these functions have
two non-degenerate critical points $(\pm 1,0)$. But for the function $u(x,y)$ these critical points lie
in different level lines, and for the function $v(x,y)$ critical points lie on one level line. Consequently, they
are not topologically equivalent, but their Gaussian curvature is the same because they are conjugate harmonic
functions.  
\end{proof}
\begin{rem}
 The graphs of functions $u(x,y)$ and $v(x,y)$ are not isometric, they have different first quadratic forms.
\end{rem}

\section{Gaussian Curvature of harmonic polynomials} 

We denote the Gaussian curvature of the graph a complex polynomial $P$ by $G_P$:
\begin{equation*}
 G_P \coloneq \dfrac{|P^{''}(z)|^2}{-(1 + |P^{'}(z)|^2)^2}.
\end{equation*}
\begin{lemma}
 The Gaussian curvature of the graph of harmonic polynomials $u(x,y)$ and $v(x,y)$ of different degrees are always different. 
\end{lemma}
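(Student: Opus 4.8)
The plan is to show that the degree of a harmonic polynomial is recorded by the rate at which its Gaussian curvature decays to zero at infinity, so that two polynomials of different degrees cannot share the same curvature function. Write $u=\operatorname{Re}P$ and $v=\operatorname{Re}Q$, where $P,Q$ are complex polynomials with $\deg P=n\neq m=\deg Q$; by symmetry assume $n>m$. The conceptual core is that $G_P(z)\cdot|z|^{2n}$ converges to a nonzero constant as $|z|\to\infty$, making the decay order $2n$ a complete invariant for the degree.

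First I would record the asymptotics of $G_P$. Writing $P(z)=a_nz^n+\cdots$ with $a_n\neq 0$, the highest-degree monomials give $|P'(z)|^2\sim n^2|a_n|^2|z|^{2(n-1)}$ and $|P''(z)|^2\sim n^2(n-1)^2|a_n|^2|z|^{2(n-2)}$ as $|z|\to\infty$, uniformly in $\arg z$, since the leading term of each polynomial dominates the remainder. Substituting into the definition of $G_P$ and using $(1+|P'|^2)^2\sim|P'|^4$, the powers of $|z|$ combine as $2(n-2)-4(n-1)=-2n$, so
\[
G_P(z)=-\frac{(n-1)^2}{n^2|a_n|^2}\,|z|^{-2n}\bigl(1+o(1)\bigr),\qquad |z|\to\infty .
\]
In particular $|z|^{2n}G_P(z)\to -c_P$ for a finite constant $c_P>0$, and $G_P$ is eventually nonzero. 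The same computation applied to $Q$ yields $|z|^{2m}G_Q(z)\to -c_Q<0$.

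The contradiction is then immediate. Suppose $G_P\equiv G_Q$. Along $|z|\to\infty$ we have
\[
|z|^{2n}G_Q(z)=|z|^{2(n-m)}\cdot|z|^{2m}G_Q(z)\longrightarrow -\infty ,
\]
because $n>m$ forces $|z|^{2(n-m)}\to\infty$ while $|z|^{2m}G_Q(z)\to -c_Q$. On the other hand $|z|^{2n}G_P(z)\to -c_P$ is finite; since $G_P\equiv G_Q$ these two limits must agree, which is absurd. Hence $G_P\neq G_Q$, as claimed. (If one prefers to avoid any uniformity in direction, it suffices to run the entire limit argument along the single ray $z=t$, $t\to +\infty$, where $G_P$ and $G_Q$ are genuine functions of $t$ that are eventually nonzero.)

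It remains to address the low-degree cases, which I expect to be the only genuinely delicate point of the statement rather than the asymptotics. If a polynomial has degree $\leq 1$ its graph is a plane and its curvature vanishes identically, whereas a polynomial of degree $\geq 2$ has $P''\not\equiv 0$ and hence nonvanishing curvature by Lemma~\ref{ka}; thus whenever exactly one of $n,m$ is $\geq 2$ the two curvatures already differ. The asymptotic argument above covers the remaining main case $n>m\geq 2$. Note that the two polynomials must be assumed of degree $\geq 2$ (as throughout the paper): the pair $\{0,1\}$ of degrees both produce identically vanishing curvature, so some nondegeneracy hypothesis on the degrees is genuinely needed for the statement to hold.
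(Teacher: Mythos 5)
Your proposal is correct and takes essentially the same route as the paper: the paper's proof likewise argues via behaviour at infinity, noting that $|P(z)|$ and $|Q(z)|$ grow like $|z|^n$ and $|z|^m$ and then asserting that substituting this into the curvature formula forces the two curvatures to differ. Your write-up is in fact more complete than the paper's, which never carries out the computation giving the decay rate $|z|^{-2n}$ and never addresses the degenerate degrees $\leq 1$, where (as you correctly observe) the statement literally fails for the pair of degrees $\{0,1\}$ since both graphs are planes with identically vanishing curvature.
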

\begin{proof}
 Let $Q(z)$ and $P(z)$ be complex polynomials of degree $n$ and $m$ resp., and let $u(x,y)$ and $v(x,y)$
be their real parts, resp. By assumption $m \neq n$. Functions $\lvert P(z) \rvert$ and $\lvert Q(z)\rvert$ 
attend to $\lvert z^n \rvert$ and $\rvert z^m \lvert$, resp., when $z$ goes to the infinity. Employing this fact for
\begin{equation}
 K(x,y) = \dfrac{|f^{''}(z)|^2}{-(1 + |f^{'}(z)|^2)^2},
\end{equation}
 we obtain that 
\begin{equation*}
 \dfrac{|P^{''}(z)|^2}{-(1 + |P^{'}(z)|^2)^2}\neq \dfrac{|Q^{''}(z)|^2}{-(1 + |Q^{'}(z)|^2)^2}.
\end{equation*}
Thus, the Gaussian curvature of polynomials $u(x,y)$ and $v(x,y)$ are different
\end{proof}
\begin{theorem}\label{hj}
 Suppose $P(z)$ and $Q(x)$ are complex polynomials. Then $G_P$ and 
$G_Q$ coincide if and only if $P(z)= \alpha Q(z)+ \beta$,
 where $\beta$ is a complex constant and $\alpha$ is a complex number such that$\rvert \alpha \lvert=1$.
\end{theorem}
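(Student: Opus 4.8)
The plan is to prove the two implications separately, with the forward direction carrying essentially all the weight.

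The converse is immediate and I would dispose of it in a line: if $P(z)=\alpha Q(z)+\beta$ with $\lvert\alpha\rvert=1$, then $P'=\alpha Q'$ and $P''=\alpha Q''$, so $\lvert P'\rvert=\lvert Q'\rvert$ and $\lvert P''\rvert=\lvert Q''\rvert$ pointwise, whence $G_P=G_Q$ directly from the defining formula. For the forward direction I would first note that $G_P=G_Q$ forces $\deg P=\deg Q$ by the preceding lemma, and I would restrict to the nondegenerate range $\deg P=\deg Q=n\geq 2$ (for $n\leq 1$ both curvatures vanish identically and the stated equivalence is only meaningful under this degree hypothesis). Introduce the nonnegative continuous function $H_P:=\sqrt{-G_P}=\lvert P''\rvert/(1+\lvert P'\rvert^2)$; the hypothesis is exactly $H_P\equiv H_Q$. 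Since $1+\lvert P'\rvert^2>0$ everywhere, the zero set of $H_P$ coincides with the zero set of the \emph{holomorphic} polynomial $P''$, and near a zero $z_0$ of $P''$ of multiplicity $k$ one has $H_P(z)/\lvert z-z_0\rvert^{k}\to \lvert P''$-leading coefficient$\rvert/(1+\lvert P'(z_0)\rvert^2)>0$, because the denominator is bounded away from $0$ there. Hence the order of vanishing of $H_P$ at $z_0$ equals the multiplicity of $z_0$ as a root of $P''$. Comparing with $H_Q$, I conclude that $P''$ and $Q''$ have identical zero sets with identical multiplicities, and therefore $P''=c\,Q''$ for some constant $c\neq 0$. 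Integrating twice gives $P'=cQ'+d$ and $P=cQ+dz+e$ for constants $d,e$.

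The remaining step pins down $c$ and $d$. Substituting $P''=c\,Q''$ into $H_P=H_Q$ and cancelling the common factor $\lvert Q''\rvert$ on the dense open set where $Q''\neq 0$ (then extending by continuity) yields the pointwise identity $\lvert c\rvert\,(1+\lvert Q'\rvert^2)=1+\lvert P'\rvert^2$. I then substitute $P'=cQ'+d$ and use that $Q'$, a nonconstant polynomial of degree $n-1\geq 1$, is surjective onto $\mathbb{C}$, so the identity becomes a genuine identity in the free variable $g=Q'(z)$ ranging over all of $\mathbb{C}$:
\[
\lvert c\rvert+\lvert c\rvert\,\lvert g\rvert^2 \;=\; 1+\lvert c\rvert^2\lvert g\rvert^2+2\operatorname{Re}(c\bar d\,g)+\lvert d\rvert^2 .
\]
Matching the coefficient of $\lvert g\rvert^2$ gives $\lvert c\rvert=\lvert c\rvert^2$, hence $\lvert c\rvert=1$; matching the term linear in $g$ gives $c\bar d=0$, hence $d=0$; the constant terms are then automatically consistent. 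Therefore $P=cQ+e$ with $\lvert c\rvert=1$, which is the assertion with $\alpha=c$ and $\beta=e$.

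The main obstacle is the middle step: extracting the rigid algebraic relation $P''=c\,Q''$ from the equality of two \emph{non-holomorphic} curvature functions. The key idea is that although $H_P$ is merely real-analytic, it nevertheless remembers the exact zeros of the holomorphic $P''$ \emph{together with their multiplicities}, precisely because the denominator $1+\lvert P'\rvert^2$ never vanishes; once this is in hand, everything reduces to integration and the elementary coefficient comparison above. A secondary point requiring care is the low-degree degenerate case, where the curvatures vanish identically and the stated characterization genuinely fails unless one assumes $\deg P=\deg Q\geq 2$.
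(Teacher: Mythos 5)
Your proof is correct, but it takes a genuinely different route from the paper's. The paper's necessity argument is an integral one: writing $n=P'$ and $m=Q'$, it integrates $\lvert n'(z)\rvert\,\lvert dz\rvert/(1+\lvert n(z)\rvert^2)$ along a path, identifies this with $\arctan\lvert n(z)\rvert$ up to a constant, concludes $\arctan\lvert n\rvert=\arctan\lvert m\rvert+\beta$ and hence $\lvert n\rvert=\lvert m\rvert+\beta$, and from there asserts the result. Several of those steps are shaky as written: $d(\lvert n(z)\rvert)$ equals $\lvert n'(z)\rvert\,\lvert dz\rvert$ only along special paths (in general one has only an inequality), and $\arctan\lvert n\rvert=\arctan\lvert m\rvert+\beta$ does not yield $\lvert n\rvert=\lvert m\rvert+\beta$ unless $\beta=0$. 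You replace all of this with two robust ingredients: (i) since the denominator $1+\lvert P'\rvert^2$ never vanishes, $H_P=\lvert P''\rvert/(1+\lvert P'\rvert^2)$ vanishes exactly at the zeros of $P''$ and to exactly their order, so $H_P\equiv H_Q$ forces $P''$ and $Q''$ to share zeros with multiplicities and hence $P''=cQ''$; and (ii) the coefficient comparison in $\lvert c\rvert(1+\lvert Q'\rvert^2)=1+\lvert cQ'+d\rvert^2$, using surjectivity of the nonconstant polynomial $Q'$, which forces $\lvert c\rvert=1$ and $d=0$. What your approach buys is a complete, rigorous chain of implications, and it also exposes a hypothesis the paper silently omits: when $\deg P=\deg Q\leq 1$ both curvatures vanish identically and the conclusion fails (e.g.\ $P=z$, $Q=2z$), so the theorem genuinely needs degree at least $2$, which you correctly flag. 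What the paper's sketch would buy, if repaired (for instance by passing directly from $H_P\equiv H_Q$ and $P''=cQ''$ to $\lvert P'\rvert$ versus $\lvert Q'\rvert$, or by noting that a holomorphic ratio $P'/Q'$ of constant modulus must be constant), is a shorter path avoiding the multiplicity analysis; but as the arguments stand, yours is the one that actually closes.
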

\begin{proof}
 \textbf{Sufficiency:} If $P(z)= \alpha Q(z)+ \beta$, then from the Formula~\eqref{ga} it follows 
the conclusion of the theorem.\newline
\textbf{Necessity:}
We only need to show that
\begin{equation} 
\dfrac{|P^{''}(z)|}{1 + |P^{'}(z)|^2} \quad \emph{and} \quad \dfrac{|Q^{''}(z)|}{1 + |Q^{'}(z)|^2}
\end{equation}
are equal. Put $P'(z) \coloneq n(z)$, obviously $n(z)$ is polynomial. Assume $\gamma(t)$ is a path lies in a region where
$n(z)$ is defined, with the initial point $z_0$ and the end point $z$. Consider the  following  integrals

\begin{align*}
 \int_{\gamma(t)}\dfrac{|n^{'}(z)||d(z)|}{1 + |n(z)|^2} 
&= \int_{\gamma(t)}\dfrac{d(|n(z)|)}{1 + |n(z)|^2} \\
&= \arctan|n(z)| + const.
\end{align*}
Similarly, set $Q^{'}(z) \coloneq m(z)$ and perform the same arguments, we obtain 
\begin{align*}
 \int_{\gamma(t)}\dfrac{|m^{'}(z)||d(z)|}{1 + |m(z)|^2} 
&= \int_{\gamma(t)}\dfrac{d(|m(z)|)}{1 + |m(z)|^2} \\
&=\arctan|m(z)| + const.
\end{align*}
Thus
$\arctan|n(z)| = \arctan|m(z)| + \beta,$
where $\beta$ is constant, and therefore $$|n(z)| = |m(z)| + \beta.$$ 
Now we can easily show that $P(z)= \alpha Q(z)+ \beta$, where $\rvert \alpha \lvert = 1$.
\end{proof}
\begin{rem}
 Theorem~\ref{hj} is not valid for any holomorphic functions, see examples in~\cite{j}.    
\end{rem}
Let $w=P(x,y)$ and $v=Q(x,y)$ be conjugate harmonic polynomials, if the parameter $t$ varies in the unit circle
in the complex plane. Then 1-parameter family of polynomials $$\cos(t)u(x,y)-\sin(t)v(x,y)$$ forms a loop
in the space of harmonic polynomials which their graphs have the same Gaussian curvature. It is easily seen
 that the functions $\pm P(x,y)$ and $\pm Q(x,y)$ lie in the loop.
Hence, from Theorem~\ref{hj} and Lemma~\ref{ka} follows the next fact:
\begin{prop}
 The fundamental group of the space of harmonic polynomials of degree $n(n \geq 2)$ which their graphs have the same
Gaussian curvature is not trivial.
\end{prop}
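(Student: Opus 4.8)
The plan is to exhibit explicitly the loop described just before the statement and to prove it is not null-homotopic. Write $f = u + \operatorname{i} v$ for the degree-$n$ holomorphic polynomial whose real and imaginary parts are the conjugate harmonic polynomials $u,v$, and consider
\begin{equation*}
 \gamma(t) = \cos(t)\,u(x,y) - \sin(t)\,v(x,y) = \operatorname{Re}\!\left(e^{\operatorname{i}t} f(z)\right), \qquad t \in [0, 2\pi].
\end{equation*}
Since $e^{\operatorname{i}t} f = \alpha f + \beta$ with $\alpha = e^{\operatorname{i}t}$, $|\alpha| = 1$, $\beta = 0$, Theorem~\ref{hj} (together with Remark~\ref{eq}, which lets me pass between a harmonic polynomial and the complex polynomial it is the real part of) shows every $\gamma(t)$ has the same Gaussian curvature as $u$. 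Thus $\gamma$ is a genuine loop, based at $u$, in the space $\mathcal{S}$ of degree-$n$ harmonic polynomials whose graphs carry the fixed curvature $G_f$; Lemma~\ref{ka} guarantees that for $n \geq 2$ this curvature is not identically zero, so the defining condition is a real constraint.

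Next I would identify $\mathcal{S}$ topologically. By the necessity part of Theorem~\ref{hj}, a complex polynomial $P$ satisfies $G_P = G_f$ exactly when $P = \alpha f + \beta$ with $|\alpha| = 1$, $\beta \in \mathbb{C}$; taking real parts and using $\operatorname{Re}(\alpha f + \beta) = \operatorname{Re}(\alpha f) + \operatorname{Re}(\beta)$ gives
\begin{equation*}
 \mathcal{S} = \{\, \operatorname{Re}(\alpha f) + c : \alpha \in S^1,\ c \in \mathbb{R} \,\}.
\end{equation*}
The map $(\alpha, c) \mapsto \operatorname{Re}(\alpha f) + c$ from $S^1 \times \mathbb{R}$ onto $\mathcal{S}$ is a continuous bijection: if $\operatorname{Re}(\alpha_1 f) + c_1 = \operatorname{Re}(\alpha_2 f) + c_2$, then $\operatorname{Re}\!\big((\alpha_1 - \alpha_2) f\big)$ is constant, so $(\alpha_1-\alpha_2)f$ is constant, forcing $\alpha_1 = \alpha_2$ because $f$ is nonconstant ($n \geq 2$), and then $c_1 = c_2$. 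Morally this already yields $\mathcal{S} \cong S^1 \times \mathbb{R}$ and $\pi_1(\mathcal{S}) \cong \mathbb{Z} \neq 0$.

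To make non-contractibility of $\gamma$ rigorous while sidestepping the need to verify directly that the bijection is a homeomorphism, I would build a continuous phase map $r : \mathcal{S} \to S^1$ and compute the winding number of $r \circ \gamma$. Regarding degree-$\leq n$ harmonic polynomials as a finite-dimensional real vector space topologized by coefficients, extract from $h \in \mathcal{S}$ its homogeneous degree-$n$ part; for $h = \operatorname{Re}(\alpha f) + c$ this equals $\operatorname{Re}(\alpha a_n z^n)$, where $a_n \neq 0$ is the leading coefficient of $f$. Since $w \mapsto \operatorname{Re}(w z^n)$ is injective, one recovers $\alpha a_n$ and hence $\alpha = (\alpha a_n)/a_n$; this is a continuous operation on the coefficients of $h$, so $r(h) := \alpha$ is well defined and continuous, and $r(\gamma(t)) = e^{\operatorname{i}t}$ traverses $S^1$ once. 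Therefore $r_*[\gamma]$ generates $\pi_1(S^1) = \mathbb{Z}$, so $[\gamma] \neq 0$ in $\pi_1(\mathcal{S})$, which is exactly the claim.

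The main obstacle I anticipate is the bookkeeping in this last step: confirming that the leading phase $\alpha$ is genuinely single-valued on $\mathcal{S}$ and continuous in the coefficient topology. This rests on the uniqueness $\alpha_1 = \alpha_2$ established above, which in turn uses that a nonconstant holomorphic polynomial has nonconstant real part — precisely where the hypothesis $n \geq 2$ (so that $f$, and with it $f''$, is nonconstant, cf. Lemma~\ref{ka}) is doing the essential work.
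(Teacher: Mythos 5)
Your argument is correct, and it begins exactly where the paper does: with the loop $\gamma(t)=\cos(t)u-\sin(t)v=\operatorname{Re}(e^{\operatorname{i}t}f)$, placed in the space $\mathcal{S}$ of equal-curvature harmonic polynomials by the sufficiency half of Theorem~\ref{hj}. The difference is everything after that. The paper offers no proof that this loop is essential --- it simply declares that the proposition ``follows from Theorem~\ref{hj} and Lemma~\ref{ka}'' --- even though non-contractibility of the loop is the entire content of the statement. You close that gap: the necessity half of Theorem~\ref{hj} identifies $\mathcal{S}$ as $\{\operatorname{Re}(\alpha f)+c : \alpha\in S^1,\ c\in\mathbb{R}\}$, and your phase map $r\colon \mathcal{S}\to S^1$, read off from the degree-$n$ homogeneous part, is continuous with $r\circ\gamma$ of winding number one, so $[\gamma]\neq 0$ in $\pi_1(\mathcal{S})$. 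Your route in fact proves more than nontriviality: since $(\alpha,c)\mapsto\operatorname{Re}(\alpha f)+c$ is the restriction of an injective linear map between finite-dimensional spaces, it is automatically an embedding, so $\mathcal{S}\cong S^1\times\mathbb{R}$ and $\pi_1(\mathcal{S})\cong\mathbb{Z}$; there was no homeomorphism issue that needed sidestepping. Two minor observations: your phase map is defined and continuous on the larger space of all harmonic polynomials whose degree-$n$ part is nonzero, and $\mathcal{S}$ lies in that space by the very definition of ``degree $n$,'' so the winding-number step really needs only the sufficiency half of Theorem~\ref{hj} (a useful economy, given that the paper's proof of the necessity half is the weakest point of the paper); and in your closing sentence, for $n=2$ the polynomial $f''$ is a nonzero constant, so the property you actually use is that $f$ is nonconstant and $f''$ is not identically zero, not that $f''$ is nonconstant.
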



\begin{thebibliography}{1}
\bibitem{mar}
 M. Marden. The geometry of the zeros of a polynomial in complex variable.              
    American Mathematical Society surveys 3, New York, 1949.
\bibitem{j}
J. Shomberg. A note on surfaces with radially symmetric nonpositive Gaussian curvature.
Mathematica Bohemica. 130, P. 167 - 176, 2005.

\end{thebibliography}
\end{document}